\documentclass[11pt,reqno]{amsart}
\usepackage[top=1.2in, bottom=1.2in, left=1.2in, right=1.2in]{geometry}
\usepackage{amsfonts, amssymb, amsmath, mathtools, dsfont}
\usepackage[linktoc=page, colorlinks, linkcolor=blue, citecolor=blue]{hyperref}
\usepackage{enumerate, commath}

\usepackage{graphicx,xcolor, abstract}
\usepackage[font=small]{caption} 
\usepackage{algorithm,algpseudocode}

\newtheorem{theorem}{Theorem}[section]
\newtheorem{proposition}[theorem]{Proposition}

\newtheorem{lemma}[theorem]{Lemma}

\newtheorem{definition}[theorem]{Definition}

\theoremstyle{remark}
\newtheorem{remark}[theorem]{Remark}

\numberwithin{equation}{section}
\DeclareMathOperator{\E}{\mathbb{E}}

\DeclareMathOperator{\diam}{diam}

\DeclareMathOperator{\Lap}{Lap}

\renewcommand{\Pr}[2][]{\mathbb{P}_{#1} \left\{ #2 \rule{0mm}{3mm}\right\}}

\def \Z {\mathbb{Z}}

\def \a {\alpha}
\def \b {\beta}

\def \e {\varepsilon}

\def \l {\lambda}
\def \s {\sigma}

\newcommand{\cA}{\mathcal{A}}

\newcommand{\cF}{\mathcal{F}}
\newcommand{\cX}{\mathcal{X}}
\newcommand{\cY}{\mathcal{Y}}

\newcommand{\cM}{\mathcal {M}}
\newcommand{\dd}{\mathrm{d}}

\newcommand{\add}[1]{{#1}}

\renewcommand{\paragraph}[1]{\subsection*{#1}}

\title{Online Differentially Private Synthetic Data Generation}
\usepackage{times}

\author{Yiyun He}
\address{Department of Mathematics, University of California Irvine, Irvine, CA 92697}
\email{yiyunh4@uci.edu}

\author{Roman Vershynin}
\address{Department of Mathematics, University of California Irvine, Irvine, CA 92697}
\email{rvershyn@uci.edu}

\author{Yizhe Zhu}
\address{Department of Mathematics, University of Southern California, Los Angeles, CA, 90089}
\email{yizhezhu@usc.edu}

\begin{document}

\maketitle


\begin{abstract}%
 We present a polynomial-time algorithm for online differentially private synthetic data generation. For a data stream within the hypercube $[0,1]^d$ and an infinite time horizon, we develop an online algorithm that generates a differentially private synthetic dataset at each time $t$. This algorithm achieves a near-optimal accuracy bound of $O(\log(t)t^{-1/d})$ for $d\geq 2$ and $O(\log^{4.5}(t)t^{-1})$ for $d=1$ in the 1-Wasserstein distance. This result extends the previous work on the continual release model for counting queries to Lipschitz queries. Compared to the offline case, where the entire dataset is available at once \cite{boedihardjo2022private, he2023algorithmically}, our approach requires only an extra polylog factor in the accuracy bound.
\end{abstract}

\section{Introduction}
{Differential} privacy (DP) has emerged as a leading standard for safeguarding privacy in scenarios that involve the analysis of extensive data collections. It aims to protect the information of individual participants within datasets from disclosure. At its core, an algorithm is differentially private if it can produce consistently randomized outcomes for nearly identical datasets. This approach to privacy preservation is gaining traction across various sectors, notably in the implementation of the 2020 US Census \cite{abowd2019census, hawes2020implementing, hauer2021differential} and among technology companies \cite{cormode2018privacy, dwork2019differential}. The scope of differential privacy extends to a wide range of data science applications, including statistical query \cite{mckenna2018optimizing}, regression \cite{chaudhuri2008privacy, su2016differentially}, parameter estimation \cite{duchi2018minimax}, and stochastic gradient descent \cite{song2013stochastic}.

Existing research focuses on developing specialized algorithms for specific tasks constrained by predefined queries. This necessitates a significant level of expertise and often requires the modification of existing algorithms. Addressing these challenges, a promising approach is to create a synthetic dataset that approximates the statistical properties of the original dataset while ensuring differential privacy \cite{hardt2012simple, bellovin2019privacy,wasserman2010statistical}. This approach facilitates subsequent analytical tasks on the synthetic dataset without additional privacy risks.

Despite extensive research in differential privacy, most advancements have focused on scenarios involving a single collection or release of data. However, in reality, datasets frequently accumulate over time, arriving in a continuous stream rather than being available all at once. This is common in various domains, such as tracking COVID-19 statistics, collecting location data from vehicles \cite{kim2021survey}, or internet search and click data\cite{chan2011private}. In these contexts, generating online synthetic data that adheres to differential privacy standards poses significant challenges \cite{bun2023continual, kumar2024privstream}.

\subsection{Continual release model}
One popular model in online differential privacy is the \textit{Continual Release Model}, first studied in \cite{dwork2010differential,chan2011private}. In this model, data points arrive in a streaming fashion, and an online algorithm releases the statistics of the streaming dataset in a differentially private manner. The initial example explored in \cite{dwork2010differential,chan2011private} was for Boolean data streams. At time $t$, a Boolean sequence $x_1,\dots, x_t\in \{0,1\}$ is available, and DP algorithms were developed to release the count $\sum_{i=1}^t x_i$ for each $t\leq T$, where $T$ is the time horizon of the streaming data sequence. The scenario when $T=\infty$ is termed the \textit{infinite time horizon}, where the input data stream is an infinite sequence, and the DP algorithm outputs an infinite sequence.

In the online setting, repeating offline DP-counting algorithms would require an increasing privacy budget over time due to the composition property of differential privacy \cite{dwork2014algorithmic}, thus not being feasible. A seminal contribution of \cite{dwork2010differential,chan2011private} is the Binary Mechanism, which achieves $\text{polylog}(t)$ error while maintaining $\varepsilon$-differential privacy for a finite time horizon $T$. Additionally, \cite{dwork2015pure} improved the accuracy of the Binary Mechanism to $O\left(\log(T)+ \log^{1.5}(n)\right)$ when the Boolean data stream is sparse, i.e., the number of 1's, denoted by $n$, is much smaller than $T$. The dependence on $T$ in \cite{dwork2015pure} is optimal and matches the $\Omega(\log T)$ lower bound in \cite{dwork2010differential} for online DP-count release.

The Binary Mechanism serves as a foundational element for many online private optimization problems \cite{guha2013nearly,kairouz2021practical}. Various methods to enhance the Binary Mechanism in different settings have been studied in \cite{chan2011private,dwork2015pure,qiu2022differential,fichtenberger2022constant,henzinger2023almost,henzinger2024unifying}. Besides counting tasks, DP algorithms for online data have also been discussed for mean estimation \cite{george2022continual}, moment statistics \cite{mir2011pan,epasto2023differentially}, graph statistics \cite{song2018differentially}, online convex programming \cite{jain2012differentially}, decaying sums \cite{bolot2013private}, user stream processing \cite{chen2017pegasus}, and histograms \cite{cardoso2022differentially}. Utilizing offline DP algorithms as black boxes, \cite{cummings2018differential} provided a general technique to adapt them to online DP algorithms with utility guarantees.

\subsection{Differentially private synthetic data}

Among all DP algorithms, DP synthetic data generation \cite{abowd2008protective, hardt2012simple, blum2013learning, jordon2019pate, bellovin2019privacy, boedihardjo2022covariance, xin2022federated, fan2022private, liu2024optimal} excels in flexibility because it allows for a wide range of downstream tasks to be performed without incurring additional privacy budgets. However, the computationally efficient construction of DP synthetic data with utility guarantees remains challenging.  \cite{ullman2011pcps} shows that it is impossible to use $\text{poly}(d)$ samples to generate $d$-dimensional DP synthetic data that approximates all 2-way marginals in polynomial time.

Most results for private synthetic data are concerned with counting queries, range queries, or $k$-dimensional marginals \cite{hardt2012simple, ullman2011pcps, blum2013learning, vietriprivate, dwork2015efficient, thaler2012faster, boedihardjo2022private2}, and various metrics have been used to evaluate the utility of DP synthetic data \cite{boedihardjo2022covariance, yang2023differentially, asadi2023gibbs}. A recent line of work \cite{boedihardjo2022private, donhauser2023sample, he2023algorithmically, he2023differentially, donhauser2024privacy} provides utility guarantees with respect to the Wasserstein distance for DP synthetic data. Using the Kantorovich-Rubinstein duality \cite{villani2009optimal}, the 1-Wasserstein distance accuracy bound ensures that all Lipschitz statistics are uniformly preserved. Given that numerous machine learning algorithms are Lipschitz \cite{von2004distance, kovalev2022lipschitz, bubeck2021universal, meunier2022dynamical}, one can expect similar outcomes for the original and synthetic data.



\subsection{Main results}
 We consider the problem of generating DP-synthetic data under the continual release model beyond the Boolean data setting considered in \cite{dwork2010differential,chan2011private}. The data stream comes from the hypercube $[0,1]^d$ equipped with the $\ell_{\infty}$-norm, and our goal is to efficiently generate private synthetic data in an online fashion while maintaining a near-optimal utility bound under the Wasserstein distance. 
 Our main result is given in the next theorem.

\begin{theorem}[Online DP synthetic data]
\label{thm: main}
For any constant $\e>0$, there is an $\e$-differentially private algorithm such that, for any data stream $x_1,\dots,x_t,\dots \in [0,1]^d$, at any time $t$, 
 it transforms the first $t$ points $\cX_t=\{x_1,\dots,x_t\}$ into $t$ points $\cY_t\subset [0,1]^d$, with the following accuracy bound: 
there exists a constant $C_{\e}$ depending only on $\e$ such that for $t\geq C_{\e}$,  
 \begin{align}\label{eq:accuracy_finite}
    \mathbb EW_1(\mu_{\mathcal X_t},\mu_{\mathcal Y_t})\lesssim 
    \begin{cases}
     \log (t) (\e t)^{-\frac{1}{d}}, & d\geq 2,\\
   \log^3(\e t)\log^{1.5}(t)(\e t)^{-1},  & d=1,
    \end{cases}
\end{align}  
where $W_1(\mu_{\mathcal X_t},\mu_{\mathcal Y_t})$ is the 1-Wasserstein distance between two empirical measures $\mu_{\cX_t}, \mu_{\cY_t}$ of  $\cX_t$ and $\cY_t$, respectively.
\end{theorem}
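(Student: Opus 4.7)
My plan is to combine the dyadic-grid construction behind the offline DP synthetic data results of \cite{boedihardjo2022private, he2023algorithmically} with the Binary Mechanism for continual counting from \cite{dwork2010differential, chan2011private}. Concretely, at every time $t$ I would maintain a differentially private count of how many of the points $x_1,\dots,x_t$ lie in each cube of a dyadic decomposition of $[0,1]^d$, and then post-process these noisy counts into a synthetic dataset $\mathcal Y_t$. Since post-processing preserves privacy, it suffices to control the $1$-Wasserstein error induced by the count noise and the granularity of the grid.

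\textbf{The algorithm.} Fix a dyadic decomposition $\mathcal P_0,\dots,\mathcal P_K$ of $[0,1]^d$, where $\mathcal P_k$ consists of $2^{kd}$ axis-aligned cubes of sidelength $2^{-k}$. For each cube $C$ I would maintain an online private approximation $\hat N_t(C)$ of $N_t(C):=|\mathcal X_t\cap C|$ using the Binary Mechanism. Because each arriving point falls into exactly one cube per level, the per-level sensitivity is $1$, and splitting the budget as $\varepsilon/(K+1)$ across the $K+1$ levels by basic composition preserves overall $\varepsilon$-DP. The infinite time horizon is handled by running the Binary Mechanism on a doubling sequence of epochs with a summable privacy allocation such as $\varepsilon/(2^j(K+1))$ on the $j$-th epoch. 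At query time $t$, I would round the $\hat N_t(\cdot)$ top-down into non-negative integer multiscale counts $\tilde N_t(\cdot)$ that are consistent across scales and sum to $t$ on every level, and emit $\mathcal Y_t$ by placing $\tilde N_t(C)$ copies of the centre of each leaf cube $C\in\mathcal P_K$.

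\textbf{Wasserstein analysis.} I would exploit the standard tree representation of $W_1$ associated with a dyadic partition of $[0,1]^d$, which gives
\begin{equation*}
W_1(\mu_{\mathcal X_t},\mu_{\mathcal Y_t}) \;\lesssim\; 2^{-K} \;+\; \frac{1}{t}\sum_{k=0}^{K} 2^{-k} \sum_{C\in\mathcal P_k}\bigl|N_t(C)-\tilde N_t(C)\bigr|.
\end{equation*}
The first term is the intra-cell transport cost at the finest scale. The Binary Mechanism yields a per-cube error of $O(\mathrm{polylog}(t)/\varepsilon_k)$ with $\varepsilon_k=\varepsilon/(K+1)$, and there are $2^{kd}$ cubes at level $k$, so the double sum is bounded by $(\mathrm{polylog}(t)/\varepsilon)\sum_k 2^{k(d-1)}$. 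For $d\ge 2$, choosing $K$ with $2^K\asymp t^{1/d}$ makes the geometric series dominated by its last term $2^{K(d-1)}=t^{(d-1)/d}$ and balances both contributions to $\varepsilon^{-1}t^{-1/d}\log(t)$. For $d=1$ the series collapses to its number of terms $K\asymp\log t$, and a sharper bookkeeping via the $L^1$-norm of the CDF difference (which coincides with $W_1$ in dimension one) recovers the quoted $(\varepsilon t)^{-1}\log^{4.5}(t)$ rate.

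\textbf{Main obstacles.} Although the skeleton is standard, several quantitative steps are delicate. First, the infinite time horizon forces a doubling-epoch reduction, and one must glue the per-epoch Binary Mechanism guarantees together without any polylog blowup and without a boundary effect at epoch transitions. Second, the rounding step that produces a consistent integer-valued multiscale histogram $\tilde N_t$ must be shown to be pure post-processing and to lose at most a constant factor in each level's count error, so that the displayed Wasserstein bound survives. Third, and most delicate in $d=1$, obtaining the sharp $\log^{4.5}(t)$ exponent requires tracking simultaneously the $\mathrm{polylog}(t)$ factor from the Binary Mechanism, the $\log t$ levels, and a high-probability union bound over all query times $t$, since a naive analysis loses at least one additional factor of $\log t$.
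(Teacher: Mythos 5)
Your overall architecture (online refinement of a dyadic partition, private per-cell counts, a consistency rounding step, and a tree-metric bound on $W_1$) matches the paper's, but you are missing the two ingredients that produce the claimed $\log t$ rather than a higher power of $\log t$ for $d\geq 2$, and one of your steps is outright wrong. The wrong step is the epoch handling: a summable per-epoch budget $\e/(2^j(K+1))$ would make the epoch-$r$ counter, active at time $t\in[2^r,2^{r+1})$, carry error $\asymp 2^r K\,\mathrm{polylog}(t)/\e\gtrsim t$, which is useless. The infinite horizon must be handled by \emph{parallel} composition in time — each data point falls in exactly one epoch, so every epoch may spend the same budget — which is precisely what the Hybrid Mechanism of Chan et al.\ and the paper's Algorithm~\ref{alg: inhomogeneous_sparse_couting} do. Relatedly, a per-level split $\e/(K+1)$ with $K$ growing in $t$ would force you to retroactively shrink the budget of counters that have already been running; the paper avoids this by letting the per-level budget $\e_{j,r}$ depend on the cell depth $j$ and the current time level $r$ with the constraint $\sum_{j\le r}\e_{j,r}\le\e$ holding for every $r$ simultaneously.

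Even ignoring the epoch issue, your quantitative claim overshoots. With $\e_k\asymp\e/\log t$ per level, the Binary/Hybrid Mechanism has per-cube expected error $\asymp\log^{1.5}t/\e_k\asymp\log^{2.5}t/\e$, and the geometric series $\sum_k 2^{k(d-1)}\asymp t^{(d-1)/d}$ then yields $\e^{-1}t^{-1/d}\log^{2.5}t$, not $\e^{-1}t^{-1/d}\log t$. The paper closes this $\log^{1.5}t$ gap with two ideas you do not use: (i) a \emph{geometrically weighted} per-level budget $\e_{j,r}\asymp\e\,2^{(j-r)(1-1/d)/2}$, increasing in the depth $j$, which concentrates the budget at the finest scales; this removes one $\log t$ relative to uniform splitting because the sum over levels is dominated by its top term instead of gaining an extra factor $r$; and (ii) the sparse-counting mechanism of \cite{dwork2015pure} in place of the raw Binary Mechanism, whose error is $O((\log T+\log^2 n)/\e)$ with $n$ the number of $1$'s in the local Boolean stream — for a depth-$j$ cell at time $t$ one has $n\lesssim t/2^j$, so $\log^2 n\lesssim(r-j)^2$, small at exactly the levels that receive the most budget. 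Without exploiting sparsity your bound cannot reach $\log t$. Finally, you omit the initialization error of newly created cells: a counter started at time $2^j$ never sees the first $2^j-1$ points, and the resulting term $\sum_{|\theta|=j}|\{s<2^j:x_s\in\Omega_\theta\}|\le 2^j$ must be propagated through the Wasserstein bound, as in the paper's Lemma~\ref{lem: inhomogeneous_accuracy} and display~\eqref{eq: pmm_cite}.
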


\begin{remark}
 From the proof of  Theorem~\ref{thm: main} in Sections~\ref{sec:proof} and \ref{sec:proof_d_1}, we can choose
 \begin{align}  
   C_{\e}=
    \begin{cases}
    \exp(\log^2(1/\e+1))+e/\e,  & d\geq 2,\\
    e/\e & d=1.
    \end{cases}  \notag
\end{align}  
\end{remark}

Our algorithm is computationally efficient. To obtain  synthetic datasets $\cY_t$ at time $t$, the time complexity is $O(dt+\e t\log t)$; see Section~\ref{sec: running_time} for details. 

The utility guarantee in Theorem~\ref{thm: main} is optimal up to a $\log t$ factor for $d\geq 2$ and $\mathrm{polylog}(t)$ for $d=1$. Compared to offline synthetic data tasks, generating online private synthetic data is much more challenging, especially with an infinite time horizon. For offline private synthetic data on $[0,1]^d$, $d\geq 2$, \cite{he2023algorithmically} proposed an algorithm with utility bound $O(n^{-1/d})$, which matches the minimax lower bound proved in \cite{boedihardjo2022private}.

We prove Theorem~\ref{thm: main} by analyzing our main Algorithm~\ref{alg: online_final} for $d\geq 2$ and $d=1$, respectively. We develop an online hierarchical partition procedure to divide the domain $[0,1]^d$ into disjoint sub-regions with decreasing diameters as time increases and then apply online private counting subroutines to count the number of data points in each subregion. After the online private counting step, we create synthetic data following the Consistency and Output steps described in Algorithm~\ref{alg: online_final}.

A key ingredient in our work is the development of a special \emph{Inhomogeneous Sparse Counting Algorithm} (Algorithm~\ref{alg: inhomogeneous_sparse_couting}) for the online private count of data points in each subregion, which has different privacy budgets for different time intervals. Such dynamic assignments are motivated by the selection of optimal privacy budgets based on the dynamic hierarchical partition. We apply the new counting algorithm with carefully designed privacy parameters and starting times for each subregion based on the hierarchical structure of the online partition.

The concept of counting sparse data also plays an important role. Intuitively, when inputs $x_1,\dots, x_t$ are uniformly distributed in $[0,1]^d$, the online count of a newly created sub-region corresponds to a sum of a sparser Boolean data stream as the diameter of the sub-region decreases. In fact, the uniformly distributed data represents the worst-case configuration of the true dataset in the minimax lower bound proof in \cite{boedihardjo2022private}, corresponding to a sparse Boolean data stream for each subregion with a small diameter. We make use of the sparsity to obtain a near-optimal accuracy bound.

 \add{The main focus of our work is on the theoretical side, proving a theoretical utility upper bound for a computationally efficient online differentially private synthetic data algorithm. The theoretical guarantee is for all deterministic datasets in the hypercube $[0,1]^d$, which should be interpreted as a near-optimal accuracy rate in the minimax sense.  We do not intend to optimize the performance of our algorithm on specific datasets in this work. Other algorithms that are adaptive to specific structures of the data might perform better in practice.}

\paragraph{Comparison to previous results}
Our work generalizes the framework of DP-counting queries for Boolean data in the continual release model \cite{dwork2010differential,chan2011private} to online synthetic data generation in a metric space. One of the subroutines, \textit{Inhomogeneous Sparse Counting} (Algorithm~\ref{alg: inhomogeneous_sparse_couting}), is a generalization of the sparse counting mechanism from \cite{dwork2015pure} with inhomogeneous noise according to the online hierarchical partition structure we introduce. The Binary Mechanism in \cite{dwork2010differential,dwork2015pure} is designed only for a finite time horizon, and a modified Hybrid Mechanism was developed for the infinite case in \cite{chan2011private}. Our Algorithm~\ref{alg: online_final} works for data streams with an infinite time horizon.

In terms of online DP synthetic data generation, \cite{kumar2024privstream} considered online DP-synthetic data for spatial datasets, and \cite{bun2023continual} studied online DP synthetic \add{Boolean data with prefixed counting queries under a different notion of differential privacy called zero-concentrated differential privacy (zCDP). Moreover,
   \cite{bun2023continual} consider specific types of counting queries and the output dataset at time $t$ is obtained from adding one new data point to the output dataset at time $t-1$. This is different from our synthetic data, where we generate different datasets at different time stamps; see part B of Section~\ref{sec: preliminaries} for more details.
To the best of our knowledge, our work is the first to generate online DP-synthetic data with utility guarantees for all Lipschitz queries.}

Finally, we discuss the difference between online DP algorithms and their offline counterparts. \cite{bun2017make} established a separation for the number of offline, online, and adaptive queries subject to differential privacy. For the continual release model, \cite{jain2021price} showed that for certain tasks beyond counting, the accuracy gap between the continual release (online) model and the batch (offline) model is $\tilde{\Omega}(T^{1/3})$, which is much better than the $\Omega(\log T)$ gap shown in \cite{dwork2010differential} between online and offline counting tasks.

Our Theorem~\ref{thm: main} shows that for a dataset in $[0,1]^d$, the accuracy gap between online and offline DP-synthetic data generation is at most a factor of $O(\log(t))$ for $d\geq 2$, and at most $O(\text{polylog}(t))$ for $d=1$. The lower bound in \cite{dwork2010differential} also implies an $\Omega(\log (T))/T$ accuracy lower bound in our setting for online DP synthetic data in $[0,1]$ with time horizon $T$. However, for $d\geq 2$, the argument in \cite{dwork2010differential} cannot be directly generalized to prove an accuracy lower bound for datasets in $[0,1]^d$. We conjecture that when $d\geq 2$, the upper bound in Theorem~\ref{thm: main} is tight in terms of the dependence on $t$.

\paragraph{Organization of the paper}
The rest of the paper is organized as follows. Section~\ref{sec: preliminaries} introduces several notations and definitions. Section~\ref{sec: syn_data} details the online DP-synthetic data generation algorithm. In Section~\ref{sec:proof}, we demonstrate that Algorithm~\ref{alg: online_final} is differentially private with the accuracy bound stated in Theorem~\ref{thm: main} for $d\geq 2$.  The proof of  Theorem~\ref{thm: main} for $d=1$ are provided in Section~\ref{sec:proof_d_1}. \add{Additional proofs are given in  Appendix~\ref{sec:addtional_proofs}.} The analysis of the time complexity for Algorithm~\ref{alg: online_final} is included in Section~\ref{sec: running_time}. 

\section{Preliminaries}

\label{sec: preliminaries}

\subsection{Differential privacy}
We use the following definitions of differential privacy and neighboring datasets from \cite{dwork2014algorithmic}.
\begin{definition}[Neighboring datasets]
    Two sets of data $\mathcal X$ and $\mathcal X'$ are \textit{neighbors} if  $\mathcal X,\mathcal X'$ differ by at most one element. 
\end{definition}
 
\begin{definition}[Differential Privacy] \label{def:DP}
A randomized algorithm $\mathcal A$ is $\e$-differentially private if for any two neighboring data $\cX, \cX'$ and any subset $S$,
\[
\mathbb P \left(\mathcal A(\cX)\in S \right)\leq \exp(\e) \cdot \mathbb P \left(\mathcal A(\cX')\in S \right).
\]
\end{definition}

\subsection{Online synthetic data generation}
We consider DP algorithms to release synthetic data in an online fashion when a data stream arrives. In our setting, a dataset is an infinite sequence \[\cX=(x_1,\dots, x_t,\dots ),\] where each $x_t\in [0,1]^d$ arrives at time $t\in \mathbb Z_+$. Two datasets $\cX, \cX'$ are \textit{neighbors} if they differ in one coordinate.  Define the time-$t$ data stream from $\cX$ as \[\cX_t=(x_1,\dots,x_t).\] 
For each time $t\in \mathbb Z_{+}$, a randomized synthetic data generation algorithm $\cA_{t}$ takes an input $\cX_t$ and outputs a synthetic dataset of size $t$ given by
\[\cY_t=(y_{1,t},\dots, y_{t,t}).\]
 It is not necessary that  $\cY_{t-1}\subset \cY_t$;  they can be completely disjoint.

An \textit{online synthetic data generation algorithm} $\cM$ with infinite time horizon takes an infinite sequence $\cX$ and output an infinite sequence of synthetic datasets such that
\begin{align}
    \cM(\cX):=(\cA_1(\cX_1),\dots, \cA_t(\cX_t), \dots)=(\cY_1,\dots, \cY_t,\dots). \notag
\end{align}
We say $\cM$ is $\e$-differentially private if $\cM$ satisfies Definition \ref{def:DP}, which guarantees that the entire sequence of outputs is insensitive to the change of any individual’s contribution.

\subsection{Wasserstein distance}
Consider two probability measures $\mu,\nu$ in  a metric space $(\Omega,\rho)$. The 1-Wasserstein distance for more details) between them is defined as 
\[W_1(\mu,\nu):=\inf_{\gamma\in\Gamma(\mu,\nu)} \int_{\Omega\times\Omega}\rho(x,y)\dd\gamma(x,y),\]
where $\gamma(\mu,\nu)$ is the set of all couplings  of $\mu$ and $\nu$.  

To quantify the utility of an online DP-synthetic data algorithm $\cA$, we compare the statistical properties of the synthetic data sets $\cY_1,\dots, \cY_t$   with the true datasets $\cX_1,\dots, \cX_t$ for all $t\in \mathbb Z_+$. We identify each data set with an empirical measure and  define 
\begin{align*}
    \mu_{\cX_t}=\frac{1}{t} \sum_{i=1}^t \delta_{x_i}, \quad \mu_{\cY_t}= \frac{1}{t} \sum_{i=1}^{t} \delta_{y_{i,t}}.
\end{align*}
We would like to have the synthetic data stream $(\cY_1,\dots, \cY_t)$ to stay close to the data stream $(\cX_1,\dots, \cX_t)$ under 1-Wasserstein distance for all $t\in \mathbb Z_+$.

For two probability measures $\mu$ and $\nu$ on $\Omega$, we have the Kantorovich-Rubinstein duality  (see e.g., \cite{villani2009optimal} for more details) that gives:
\begin{align}\label{eq: KRduality}
W_1(\mu, \nu) = \sup_{f\in \cF}\Big(\int_{\Omega} f \dd\mu - \int_{\Omega} f \dd\mu\Big),
\end{align}
where $\cF$ denotes the function class of all 1-Lipschitz functions.  

\subsection{Integer Laplacian distribution}
Since our method involves private counts of data points in different regions, we will use integer Laplacian noise to ensure they are integers.
    An \emph{integer (or discrete) Laplacian distribution} \cite{inusah2006discrete} with parameter $\s$ is a discrete distribution on $\mathbb Z$ with probability density function 
    \[f(z) = \frac{1-p_\s}{1+p_\s} \exp \left( -\abs{z}/\s \right),
    \quad z \in \Z,\]
    where $p_\s = \exp(-1/\s)$.
   A random variable $Z \sim \Lap_\Z(\s)$ is mean-zero and sub-exponential with variance $\mathrm{Var(Z)\leq 2\s^2}$.








\section{Online synthetic data}
\label{sec: syn_data}


\subsection{Binary partition}\label{sec:binary_partition}
We follow the definition of binary hierarchical partition as described in \cite{he2023algorithmically}. A binary hierarchical partition of a set $\Omega$ of depth $r$ is a family of subsets $\Omega_\theta$, indexed by $\theta \in \{0,1\}^{\le r}$,  
$$
\{0,1\}^{\le k} = \{0,1\}^0 \sqcup \{0,1\}^1 \sqcup \cdots \sqcup \{0,1\}^k, 
\quad k=0,1,2\dots, 
$$
and such that $\Omega_{\theta}$ is partitioned into $\Omega_{\theta0}$ and $\Omega_{\theta1}$ for every $\theta \in \{0,1\}^{\le r-1}$. 
Here $\{0,1\}^0$ corresponds to the degenerate case where $\Omega_\emptyset:= \Omega$.
If $\theta \in \{0,1\}^j$, we refer to $j=|\theta|$ as the depth of $\Omega_\theta$. When $\Omega=[0,1]^d$ equipped with the $\ell_{\infty}$-norm, a subregion $\Omega_{\theta}$ with $\theta \in \{0,1\}^j$ has a volume of $2^{-j}$ and $\mathrm{diam}(\Omega_{\theta}) \asymp 2^{-\lfloor j/d\rfloor}$.

Let $\{\Omega_\theta,{\theta \in \{0,1\}^{\le r}}\}$ represent a binary partition of $\Omega$. Given a true data stream $(x_1,\ldots,x_t) \in \Omega^t$, the true count $n_\theta^{(t)}$ is the number of data points in the region $\Omega_\theta$ at time $t$, i.e.,
$$n_\theta^{(t)} :=\abs{\left\{ i \in [t]: \; x_i \in \Omega_\theta \right\}}.$$

We can also represent a binary hierarchical partition of $\Omega$ in a binary tree of depth $r$, where the root is labeled $\Omega$ and the $j$-th level of the tree $\mathcal{T}$ encodes the subsets $\Omega_{\theta}$ for $\theta$ at level $j$. As new data arrives, we refine the binary partition over time and update the true count $n_\theta^{(t)}$ in each subregion at time $t$.
As we continue refining the partition of $\Omega$, the binary tree $\mathcal{T}$ expands in the order of a breadth-first search, and the online synthetic data we release will depend on a noisy count $N_{\theta}^{(t)}$ of data points in each region $\Omega_{\theta}$ at time $t$.

\subsection{Main algorithm}
 We can now introduce our main algorithm for online differentially private synthetic data release for the case, described formally in Algorithm~\ref{alg: online_final}.
 

Algorithm~\ref{alg: online_final} uses the dynamic partition of the domain to generate synthetic data dynamically by adding dependent noise to perturb the counts of true data in each subregion. More precisely, it consists of the following steps:
\begin{enumerate}
    \item Refine a binary partition of $\Omega=[0,1]^d$ as time $t$ grows. Equivalently, the tree $\mathcal T$ encoding the binary partition grows over time in the breath-first search order. We will refine the partition and create all sub-regions $\Omega_{\theta}$ for all $|\theta| = j$ at timestamp $t_j=\lceil 2^j/\e\rceil$. We say $t$ is of level $j$ if $t_j\leq t<t_{j+1}$, or equivalently $\mathcal T$ has depth $j$. 
    
    Note that any sub-region $\Omega_\theta$ with $|\theta|=j$ in Algorithm~\ref{alg: online_final} only exists from level $j$: before level $|\theta|$ it was not created and after time $t_{j+1}$ it still exists and will be refined.
    
    \item For each existing region $\Omega_{\theta}$ at time $t$, to count the number of data in $\Omega_\theta$ privately, we output a perturbed count $N_{\theta}^{(t)}$ using a new \textit{Inhomogeneous Sparse Counting Algorithm} described in Algorithm~\ref{alg: inhomogeneous_sparse_couting}. For each subregion $\Omega_\theta$, an online counting subroutine $\cA_\theta$ starts as soon as $\Omega_\theta$ is created, and it outputs a noisy count for every time $t$ afterward. Privacy and accuracy guarantees for Algorithm~\ref{alg: inhomogeneous_sparse_couting} are given in \add{Part~\ref{sec: onlinecounting} of Section~\ref{sec: syn_data}}. 

    The choice of $r_0=|\theta|$ for $d\geq 2$ indicates that $\Omega_\theta$ will not load the historical data which came before level $j$ (see Algorithm~\ref{alg: inhomogeneous_sparse_couting}). The exact choices of the privacy parameters $\{\varepsilon_{j,r}\}$ in Algorithm~\ref{alg: online_final} to implement the subroutines $\mathcal A_{\theta}$ when $d\geq 2$ are given in \eqref{eq: optimal_eps}. The choices of $\varepsilon_{j,r}$ when $d=1$ are given in \eqref{eq: optimal_eps_d=1}. 
    
    \item The noisy counts in the Perturbation step could be negative and inconsistent. We post-process them to ensure they are non-negative, and the counts of subregions always add up to the count of the region in the upper level. The details of this step are given in Algorithm~\ref{alg: consistency}, see Section~\ref{sec: consistency}. 
    \item Finally, we turn the online synthetic counts in each region into online synthetic data by choosing the same amount of data points in each region whose location is independent of the true data.
\end{enumerate}

\begin{algorithm}
	\caption{Online synthetic data}
        \label{alg: online_final}
	\begin{algorithmic}
        \State {\bf Input:} Privacy budget $\varepsilon$ and an infinite data sequence $\{x_i\}_{i=1}^{\infty}$ where $x_i\in [0,1]^d$.  For each time $t$,  data points $(x_1,\dots, x_t)$ are available. 
	\State {\bf Initialization:} Set $t=t_0= 1$, $\Omega_\emptyset=\Omega$, and the depth of partition tree $r\gets 0$.
 
        \State \While{$t\in \mathbb N$}
            \While{$t\geq 2^r/\e$}
                \State $r\gets r+1$.
                \State {\bf (New binary partition)}  Partition $\Omega_\theta$ into $\Omega_{\theta 0}$ and $\Omega_{\theta 1}$ for every $|\theta|=r-1$.

                \State {\bf (New subroutines)} For every newly created $\Omega_\theta$, Initiate  a subroutine denoted by $\cA_\theta$. Here $\cA_\theta$ implements Algorithm~\ref{alg: inhomogeneous_sparse_couting} with input parameters given by starting level
                \[r_0 = \begin{cases}
                    |\theta| &\text{if } d\geq 2,\\
                    0 &\text{if }d=1
                \end{cases}\]
                and  privacy parameters $\e_{|\theta|,r_0}, \e_{|\theta|, r_0+1},\dots$.  The Input Boolean data sequence of $\cA_{\theta}$ will be specified in the Perturbation step below.
            \EndWhile 
            
    	\State {\bf (Perturbation)} For every $\Omega_\theta$, where $1\leq |\theta|\leq r$, compute the noisy online count $N_\theta^{(t)}$ using the subroutine $\cA_\theta$  with an  updated input Boolean data stream  $\left\{\mathbf{1}_{\{x_i\in \Omega_\theta\}}\right\}_{i=2^{|\theta|}}^t$.  Namely, $\mathbf{1}_{\{x_t\in \Omega_\theta\}}$ is added to the data stream.
     
    	\State {\bf (Consistency)} Transform the perturbed counts of each subregion, $\{N_\theta^{(t)}\}_{|\theta|\leq r}$, into  non-negative consistent counts  $\{\widehat N_\theta^{(t)}\}_{|\theta|\leq r}$ using Algorithm~\ref{alg: consistency}.
      
    	\State {\bf (Output)}  Output the synthetic data $\cY_t$ by choosing the locations of $\widehat N_\theta^{(t)}$ many data points arbitrarily and independently of the true data within each subregion $\Omega_\theta$ where $|\theta|=r$. 
     
            \State Let $t\gets t+1$.
        \EndWhile
	\end{algorithmic}
\end{algorithm}




\subsection{Sparse counting algorithm from \cite{dwork2015pure}}
The new Inhomogeneous Sparse Counting Algorithm (Algorithm~\ref{alg: inhomogeneous_sparse_couting}) is a generalization of the Sparse Counting Algorithm introduced in \cite{dwork2015pure}.  In this section, we provide some descriptions and properties of the algorithm from \cite{dwork2015pure} with some modifications for our applications in \add{Part~\ref{sec: onlinecounting} of Section~\ref{sec: syn_data}}.

The sparse counting algorithm with finite time horizon $T$ in \cite{dwork2015pure} is $\e$-DP with an optimal accuracy error $O(\log T)$ when the data stream is sparse. The idea of the sparse counting algorithm is to partition the timeline into multiple segments, and each segment only contains a small amount of non-zero data.
Algorithm~\ref{alg: finite_sparse_couting} we present here is a slight modification of the algorithm in \cite{dwork2015pure} by choosing a different partition threshold $T_0$, while
 the value of the partition threshold $T_0$ in \cite{dwork2015pure} is related to an extra parameter, the confidence probability. We define some terms used in the description of Algorithm~\ref{alg: finite_sparse_couting}:
 \begin{itemize}
     \item \textit{Segment}: a segment is a time interval. Algorithm~\ref{alg: finite_sparse_couting} partition the time interval $[0,T]$ into several sub-intervals called segments.
     \item \textit{Online counting subroutine}: This subroutine takes a non-negative integer data stream and outputs a private running sum at each time $t$.
 \end{itemize}

\begin{algorithm}
    \caption{Sparse counting with a finite time horizon, modified from \cite{dwork2015pure}.}
    \label{alg: finite_sparse_couting}
    \begin{algorithmic}
    
    \State {\bf Input:} Time horizon $T< \infty$, Boolean data sequence $\{X_i\}_{i=1}^T$.
    Privacy parameter $\e$. 
        
    \State {\bf Initialization:} Set $T_0 = 9\log T/\e$ to be the partition threshold and $j=1$ denoting the number of segments. $t=0$. Let $\widetilde{N}=0$ denote the private count in the previous $(j-1)$ segments.

    \State {\bf Online counting subroutine:} Start an online counting subroutine $\cA_{\text{sub}}$ with input privacy parameter $\e/2$ and input data stream to be determined later.

    \State \While{$t\leq T$}
        
        \State Set the segment count ${N_j} = 0$ and the private threshold $\widetilde T_j = T_0 + \lambda_j$, where $\lambda_j\sim \Lap_{\Z}(2/\e)$.
        
        \State \While {$t\leq T$ and $N_j + \l'_t \leq \widetilde{T_j}$, where $\l'_t\sim\Lap_\Z(2/\e)$}
             \State Set $t\gets t+1$, $N_j \gets N_j + X_t$.
             
             \State {\bf Output.} Output the same $\widetilde{N}$ for all $t$ in this segment.
        \EndWhile

        \State End the segment $j$ and set $j\gets j+1$.
        
        \State Run $\cA_{\text{sub}}$ with an  updated input Boolean data stream  $\{N_i\}_{i=1}^{j-1}$.  Namely, $N_{j-1}$ is added to the data stream.
        Update $\widetilde N$ to be the latest output of $\cA_{\text{sub}}$.
        
    \EndWhile
    \end{algorithmic}
\end{algorithm}

Algorithm~\ref{alg: finite_sparse_couting} has various choices of the online counting subroutine $\cA_{\text{sub}}$. One choice is the \emph{Binary Mechanism}  proposed in \cite{chan2011private,dwork2010differential}, which ensures differential privacy for any input data stream. The original version of \cite[Algorithm 2]{chan2011private} requires the data sequence to be Boolean, but it can also be used for non-negative integer data streams; see, for example, \cite{dwork2015pure}.

The privacy and utility guarantee of the Binary Mechanism is given in the next lemma. \add{The proof of Lemma~\ref{lem:chan_accuracy} is deferred to Appendix~\ref{sec:addtional_proofs}}.

\begin{lemma}\label{lem:chan_accuracy}
    The \emph{Binary Mechanism} is $\e$-differentially private for a finite time horizon $T$. And for any time $t\in [0,T]$, let $\mathrm{error}_t$ denote the error at time $t$ between the true count $\sum_{i=1}^t X_i$ and the output at time $t$, we have 
    \begin{equation}
\label{eq: chan_accuracy}
    \E \mathrm{error}_t \leq  \frac{C}{\e}\log^{1.5}T.
\end{equation}
\end{lemma}

With the guarantee of the subroutine using the Binary Mechanism, \cite{dwork2015pure} shows that their sparse counting Algorithm indeed improves the counting error. We will prove a similar expectation bound with the new partition threshold $T_0$ in Algorithm~\ref{alg: finite_sparse_couting}. \add{The proof of Lemma~\ref{lem: sparse} is also deferred to Appendix~\ref{sec:addtional_proofs}}.

\begin{lemma}
    \label{lem: sparse}
When choosing the online counting subroutine as the Binary Mechanism,     Algorithm~\ref{alg: finite_sparse_couting} attains $\e$-differential privacy. 
    Let $\mathrm{error}_t$ denote the counting error between true count $\sum_{i=1}^t X_i$ and the output at time $t$. Then, for any fixed $t$, there is an accuracy bound
    \[\E \mathrm{error}_t \lesssim (\log T + \log^{1.5} n)/\e,\]
    where $T$ is the bound for the time horizon, and $n$ is the number of the non-zero elements in the input data stream.
\end{lemma}

\subsection{Noisy online count}\label{sec: onlinecounting}


We now provide a detailed description of the Perturbation step in Algorithm~\ref{alg: online_final}. Our goal is to output a noisy count of the points $x_1,\dots, x_t$ in $\Omega_{\theta}$, denoted by $N_{\theta}^{(t)}$. Since \[ n_{\theta}^{(t)}=\sum_{i=1}^t \mathbf{1}_{\{ x_i\in \Omega_{\theta}\}},\]
this step is closely related to the differentially private count release under continual observation for  Boolean data studied in \cite{dwork2010differential,chan2011private,dwork2015pure}. 

\begin{algorithm}
    \caption{Inhomogeneous sparse counting}
    \label{alg: inhomogeneous_sparse_couting}
    \begin{algorithmic}
    \State {\bf Input:} Output starting level $r_0$. Boolean data sequence $\{X_t\}_{t=2^{r_0}}^\infty$. Noise parameters $\e_{r_0},\e_{r_0+1},\dots$.
        
    \State {\bf Initialization:} Set the finite private count  $\widetilde{S}\gets 0$, the current level $r\gets r_0$, and the timestamps $t_{r_0}=\lceil2^{r_0}/\e\rceil$, $t_{r_0+1}=\lceil 2^{r_0+1}/\e \rceil$. 
    
    \State \While {$r_0 \leq r < \infty$}
        \State {\bf Counting subroutine:} For $t\in [t_r, t_{r+1})$, apply Algorithm~\ref{alg: finite_sparse_couting} with time horizon $t_{r+1 }- t_{r}$ and privacy parameter $\e_r/2$. Record the outputs $c_{t_r},\dots,c_{t_{r+1}-1}$.
        
        \State {\bf Output.} Output $\widetilde{S} + c_t$ as the private count  at time $t$ for each $t\in [t_r, t_{r+1})$.
        
        \State Update \[\widetilde{S} \gets \widetilde{S} + \sum_{t=t_r}^{t_{r+1}-1} X_t + \Lap_{\Z}(2/\e_r)\] and start a new level with $r\gets r+1$, $t_{r+1}\gets \lceil 2^{r+1}/\e \rceil$.
    \EndWhile
    \end{algorithmic}
\end{algorithm}

Algorithm~\ref{alg: inhomogeneous_sparse_couting} is based on Algorithm~\ref{alg: finite_sparse_couting} and uses integer Laplacian noise with different variances in different time intervals. 
We now give several definitions to describe Algorithm~\ref{alg: inhomogeneous_sparse_couting}:
\begin{itemize}
    \item \textit{Time level}: Starting from level $0$, we say time $t$ is at \textit{level} $j$ if $2^r/\e \leq t< 2^{r+1}/\e$. In Algorithm~\ref{alg: inhomogeneous_sparse_couting}, we process the data stream level by level, where level $r$ starts from the timestamp $t_r = \lceil 2^r/\e \rceil $. 
    \item \textit{Starting level}: We set an additional input $r_0$ to indicate the level from which the output starts. More precisely, the output of Algorithm~\ref{alg: inhomogeneous_sparse_couting} start from time $t_{r_0} = \lceil 2^{r_0}/\e \rceil$. 
     We use $\widetilde S$ to store the private count from starting level $r_0$ to level $(r-1)$, and $\widetilde S$ does not include the count of data points arriving before the starting level $r_0$.
     
    \item \textit{Counting subroutine}: The subroutine Algorithm~\ref{alg: finite_sparse_couting} is an online counting algorithm with finite time horizon. It takes a Boolean data series as input and outputs the private counts of the first $t$ data points at any time $t$. Here, we apply Algorithm~\ref{alg: finite_sparse_couting} to the Counting subroutine to obtain noisy counts $c_t$ of the number of 1's arriving in the time interval $[t_r,t]$  for each $t\in [t_r, t_{r+1})$. 
    
    \item \textit{Update of $\widetilde S$}: During the counting subroutine, $\widetilde S$ is not updated. It is only updated at the end of the time level $r$ by adding a noisy count $\sum_{t=t_r}^{t_{r+1}-1} X_t + \Lap_{\Z}(2/\e_r)$.
\end{itemize}



The following lemma is a privacy guarantee for Algorithm~\ref{alg: inhomogeneous_sparse_couting}.
We show Algorithm~\ref{alg: inhomogeneous_sparse_couting} gives differential privacy under different notions of neighboring data sets $\cX, \cX'$ depending on when their different data points arrive in the data stream. 
\begin{lemma}
    \label{lem: inhomogeneous}
    Let $\cA$ be Algorithm~\ref{alg: inhomogeneous_sparse_couting}. For two datasets $\cX,\cX'$ which differ on one data point at time $t\in [t_r,t_{r+1})$, and for any measurable subset $S$ in the range of $\cA$, the following holds:
    \begin{enumerate}
        \item If $r\geq r_0$, $\Pr{\cA(\cX)\in S} \leq e^{\e_r} \cdot \Pr{\cA({\cX'})\in S}$.
        \item If $r< r_0$, $\Pr{\cA(\cX)\in S} = \Pr{\cA({\cX'})\in S}$.
    \end{enumerate}
\end{lemma}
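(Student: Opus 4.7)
The plan is to decompose the algorithm's dependence on the differing data point into at most two independent sub-mechanisms and then invoke basic composition together with post-processing immunity. Throughout, let $t^{*} \in [2^r, 2^{r+1})$ denote the common index at which $\cX$ and $\cX'$ disagree.

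\emph{Part (2).} If $r < r_0$, the main loop of Algorithm~\ref{alg: inhomogeneous_sparse_couting} never executes before level $r_0$ and never reads any $X_t$ with $t < 2^{r_0}$; in particular $X_{t^{*}}$ is ignored. The output is therefore a measurable function of $(X_t)_{t \ge 2^{r_0}}$ and of the algorithm's internal randomness, both of which coincide under $\cX$ and $\cX'$, so the two output distributions are exactly equal.

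\emph{Part (1).} When $r \ge r_0$, I would show that $X_{t^{*}}$ reaches the output only through two channels: (i) the level-$r$ invocation of Algorithm~\ref{alg: finite_sparse_couting} with budget $\e_r/2$, producing $(c_t)_{t \in [2^r, 2^{r+1})}$, which is $(\e_r/2)$-differentially private in its input by the analysis of the finite sparse-counting mechanism recalled in Appendix~\ref{appendix: Dwork}; and (ii) the end-of-level-$r$ update $\widetilde S_r = \widetilde S_{r-1} + \sum_{t=2^r}^{2^{r+1}-1} X_t + Z_r$ with $Z_r \sim \Lap_\Z(2/\e_r)$, in which the Boolean sum has sensitivity $1$ and the integer Laplace mechanism at scale $2/\e_r$ is $(\e_r/2)$-differentially private. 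To confirm that no other channel exists, I would induct on $r' > r$: each subsequent update $\widetilde S_{r'} = \widetilde S_{r'-1} + \sum_{t=2^{r'}}^{2^{r'+1}-1} X_t + Z_{r'}$ adds only unchanged data and a fresh independent Laplace noise, and each later counting subroutine acts on an unchanged substream. Conditionally on $\widetilde S_r$, the entire tail of the execution has identical distribution under $\cX$ and $\cX'$, and similarly the outputs $N_t$ for $t < 2^r$ do not involve $X_{t^{*}}$ at all.

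Since the random sources in (i) and (ii) are independent of each other and of every other random input of $\cA$, basic composition for two independent mechanisms on the same sensitive input yields that the pair $\bigl((c_t)_{t \in [2^r, 2^{r+1})}, \widetilde S_r\bigr)$ is $\e_r$-differentially private. The full output stream $(N_t)_{t \ge 2^{r_0}}$ is a deterministic function of this pair together with data and noises independent of the differing coordinate, so post-processing immunity of differential privacy delivers the desired likelihood-ratio bound $e^{\e_r}$. The main obstacle I anticipate is purely bookkeeping: rigorously formalizing the induction so that the two channels (i) and (ii) are exhaustive and independent, in particular tracking how the reset of the counting subroutine at the boundary of each level prevents $X_{t^{*}}$ from re-entering subsequent invocations; once this is in place, the composition step is routine.
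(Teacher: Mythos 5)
Your proof is correct and follows essentially the same approach as the paper: Part (2) holds because $X_{t^*}$ is never read, and Part (1) decomposes the influence of $X_{t^*}$ into the two channels (the level-$r$ counting subroutine at budget $\e_r/2$ and the Laplace noise added at the end of level $r$ at budget $\e_r/2$), combining them to get $\e_r$. One small difference worth noting: you correctly invoke basic (sequential) composition for combining the two channels, since both touch the same sensitive coordinate $X_{t^*}$, whereas the paper's proof cites the parallel composition rule, which is loose terminology here; your explicit induction and post-processing argument to show these two channels are exhaustive is also spelled out more carefully than in the paper, which leaves that step implicit.
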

\begin{proof}
For such $\cX,\cX'$ in the theorem, when $r< r_0$, one can notice that $X_t$ does not appear in the algorithm. Therefore, the second assertion holds.


When $r \geq r_0$, to have the different data value at time $t$ would make the following two influences:
    \begin{enumerate}
        \item When $\widetilde{S}$ first counts $X_t$ privately applying the $\e_r/2$-differentially private subroutine;
        \item When updating the count $\widetilde S$, we add noise $\Lap_{\Z}(2/\e_r)$, which implies another privacy budget $2/\e_r$.
    \end{enumerate}
    By the parallel composition property of differential privacy \cite{dwork2014algorithmic}, we know for the data at time $t$, the algorithm is $\e_r$ differentially private.
\end{proof}

Lemma~\ref{lem: inhomogeneous_accuracy} bounds the difference between a noisy count and the true count in Algorithm~\ref{alg: inhomogeneous_sparse_couting}  for different time intervals.
\begin{lemma}\label{lem: inhomogeneous_accuracy}
    For each time $t\in [t_r,t_{r+1})$, $\widetilde{S}+c_t$ is the output of the noisy count at time $t$ in Algorithm~\ref{alg: inhomogeneous_sparse_couting}. We have  
    \[ \E | \widetilde{S}+c_t-S_t|\lesssim \sum_{i=1}^{t_{r_0}-1} X_i + \sum_{i=r_0}^{r-1} \frac{1}{\e_i} + \frac{\log t + \log^{1.5} n_r}{\e_r},\]  
    where $S_t:=\sum_{i=1}^t X_i$ is the true count at time $t$ and $n_r:=\sum_{i=t_r}^{t_{r+1}}X_i$.
\end{lemma}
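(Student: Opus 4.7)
The plan is a direct decomposition of the output error into three independent sources: the ``missing'' counts from times before the starting level $r_0$, the inter-level integer Laplacian perturbations, and the intra-level sparse-counting error from the subroutine Algorithm~\ref{alg: finite_sparse_couting}. The key observation is that, by construction of Algorithm~\ref{alg: inhomogeneous_sparse_couting}, these three error sources appear additively.

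First I would fix $t\in[2^r,2^{r+1})$ and unfold the algorithm. By the update rule for $\widetilde S$, at the start of level $r$ we have
\[
\widetilde S \;=\; \sum_{i=r_0}^{r-1}\Bigl(\sum_{s=2^i}^{2^{i+1}-1} X_s + Z_i\Bigr)
\;=\; \sum_{s=2^{r_0}}^{2^r-1} X_s \;+\; \sum_{i=r_0}^{r-1} Z_i,
\qquad Z_i\sim \Lap_{\Z}(2/\e_i)\text{ independent},
\]
and the algorithm outputs $N_t=\widetilde S + c_t$, where $c_t$ is the output of Algorithm~\ref{alg: finite_sparse_couting} with time horizon $2^r$ and privacy parameter $\e_r/2$ on the Boolean stream $(X_{2^r},\dots,X_{2^{r+1}-1})$. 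Subtracting $S_t=\sum_{i=1}^t X_i$ and regrouping,
\[
N_t - S_t \;=\; -\sum_{i=1}^{2^{r_0}-1} X_i \;+\; \sum_{i=r_0}^{r-1} Z_i \;+\; \Bigl(c_t - \sum_{i=2^r}^{t} X_i\Bigr).
\]

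Next I would take expected absolute values and apply the triangle inequality. The first term is deterministic and contributes $\sum_{i=1}^{2^{r_0}-1} X_i$ exactly. For the integer Laplacian noises, $\E|\Lap_\Z(2/\e_i)|\lesssim 1/\e_i$ by the standard first-moment bound (recall $\Var(\Lap_\Z(\sigma))\leq 2\sigma^2$ as stated in the preliminaries), so the middle sum contributes at most $\sum_{i=r_0}^{r-1} 1/\e_i$. For the sparse-counting error, I would invoke the accuracy guarantee for Algorithm~\ref{alg: finite_sparse_couting} from \cite{dwork2015pure} (restated in Appendix~\ref{appendix: Dwork} in our notation): with time horizon $T=2^r$, privacy parameter $\e_r/2$, and at most $n_r$ ones in the input window, the expected error at any time in the window is $O\bigl((\log T + \log^2 n_r)/\e_r\bigr) = O\bigl((r+\log^2 n_r)/\e_r\bigr)$. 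Since $t\geq 2^r$ yields $r\leq \log_2 t$, this is bounded by $(\log t + \log^2 n_r)/\e_r$ up to constants, matching the last term in the claim.

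The three pieces combine to give exactly the bound in the statement. The main obstacle, which is minor but worth being careful about, is the third piece: one has to verify that the accuracy statement for Algorithm~\ref{alg: finite_sparse_couting} applies uniformly for every $t$ in the level-$r$ window (not just at the endpoint $t=2^{r+1}-1$) and that the sparsity parameter in that bound may be taken as $n_r$ (the total number of ones in the whole level-$r$ window), not only the prefix count up to $t$. Both facts are available from the appendix version of the sparse counting mechanism, since its error is monotone in the sparsity and holds with high probability simultaneously over all timestamps in the finite horizon; invoking these uniformly and then taking expectation finishes the proof.
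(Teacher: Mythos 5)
Your proof is correct and matches the paper's argument essentially line for line: both decompose $N_t - S_t$ into the deterministic pre-$r_0$ mass, the accumulated inter-level integer Laplacian noises in $\widetilde S$, and the intra-level error of Algorithm~\ref{alg: finite_sparse_couting}, then apply the triangle inequality together with Lemma~\ref{lem: sparse} and the first-moment bound for $\Lap_\Z$. Your closing remark about uniformity over $t$ in the level window and the use of the full window sparsity $n_r$ is indeed covered by Lemma~\ref{lem: sparse} as stated (it holds for any fixed $t$ and bounds the error in terms of the total number of ones $n$ in the horizon), so there is no gap.
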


\begin{proof}
    The accuracy part follows from the results of the subroutine and Laplacian mechanism. 
    At time $t\in [t_r,t_{r+1})$, by the result of  Lemma~\ref{lem: sparse}, we know the error of count $c_t$ at time $t$ has bound 
    \[\E \bigg|c_t - \sum_{i=t_r}^t X_i\bigg| \lesssim \frac{\log t + \log^{1.5} n_r}{\e_r}.\]
    And by the Laplacian mechanism, we know the count $\widetilde S$ has the accumulating error from each level (starting from $r_0$), namely
    \[\E \bigg|\widetilde S - \sum_{i=t_{r_0}}^{t_{r} -1} X_i\bigg| \lesssim \sum_{i=r_0}^{r-1} \frac{1}{\e_i}.\]
    Therefore, considering that $\widetilde S$ ignored the the data before level $r_0$, we deduce that
    \begin{align*}
        \E \abs{N_t-S_t} & = \E \bigg|{\widetilde S + c_t - \sum_{i=1}^t X_i}\bigg| \\
        & \leq \sum_{i=1}^{t_{r_0}-1} \! X_i + \E \bigg|\widetilde S - \!\sum_{i=t_{r_0}}^{t_{r} -1}\! X_i\bigg| + \E \bigg|c_t - \!\sum_{i=t_r}^t \!X_i\bigg| \\
        & \lesssim \sum_{i=1}^{t_{r_0}-1} X_i + \sum_{i=r
        _0}^{r-1} \frac{1}{\e_i} + \frac{\log t + \log^{1.5} n_r}{\e_r}.
    \end{align*}
\end{proof}

\subsection{Consistency}\label{sec: consistency}

The consistency step is adapted from \cite[Algorithm 3]{he2023algorithmically}, as described in Algorithm~\ref{alg: consistency}. We first convert all negative noisy counts to $0$ and then transform the entire sequence of noisy counts for the hierarchical partition into a consistent count, ensuring the counts from subregions add up to the count at the next level. In terms of the partition tree $\mathcal{T}$ described in Section~\ref{sec:binary_partition}, consistency means that the counts from any two nodes sharing the same parent will sum to the count from the parent node. This step is crucial for Algorithm~\ref{alg: online_final} to obtain a probability measure close to the empirical measure $\mu_{\cX_t}$ in the Wasserstein distance.

\begin{algorithm}	
\caption{Consistency} \label{alg: consistency}
\begin{algorithmic} 

\State {\bf Input:} Integer sequence $(n'_\theta)_{\theta \in \{0,1\}^{\le r}}$  corresponding to the private count of each $\Omega_{\theta}$.
\State For the case $j=0$, set $m \gets \max(n', 0)$. 

\For{$j=0,\ldots,r-1$}
    \For{$\theta \in \{0,1\}^j$}
        \State Set the counts 
        \[n'_{\theta0} \gets \max(n'_{\theta0}, 0), n'_{\theta1} \gets \max(n'_{\theta1}, 0).\]

        \State Transform the vector $(n'_{\theta0},n'_{\theta1}) \in \Z_+^2$ into any vector $(m_{\theta0},m_{\theta1}) \in \Z_+^2$ s.t.
        \[m_{\theta0} + m_{\theta1} = m_\theta;\quad (m_{\theta0}-n_{\theta0})(m_{\theta1}-n_{\theta1})\geq 0\]
    \EndFor
\EndFor

\State {\bf Output:} non-negative integers $(m_\theta)_{\theta \in \{0,1\}^{\le r}}$.

\end{algorithmic}
\end{algorithm}

\section{Proof of Theorem \ref{thm: main} when $d\geq 2$}\label{sec:proof}
We now prove that when $d\geq 2$, Algorithm~\ref{alg: online_final} satisfies the privacy and accuracy guarantee in Theorem~\ref{thm: main}.     To complete our proof, in Algorithms~\ref{alg: online_final} and ~\ref{alg: inhomogeneous_sparse_couting}, we choose privacy parameters
\begin{equation}
\label{eq: optimal_eps}
    \e_{j,r}  = C_1\e 2^{(j-r)(1-1/d)/2}, \; \text{where }C_1=\frac{1-2^{-(1-1/d)/2}}2.
\end{equation}
Denote $\alpha\coloneq 2^{(1-1/d)/2}\in [2^{1/4}, \sqrt{2})$ and we can check
\begin{align}\label{eq:sumeps_d}
\sum_{j=1}^s \e_{j,s} = \sum_{j=1}^s C_1\e \alpha^{j-s} = \frac{C_1\e (1 - \alpha^{-s})}{1-\alpha^{-1}}\leq \frac{C_1 \e}{1-\alpha^{-1}}\leq \frac{\e}{2}.
\end{align}

\subsection{Privacy}


\begin{proposition} 
\label{prop: online_syndata_privacy}
For any choice of privacy parameters $\varepsilon_{j,r}$ satisfying 
\begin{align}
     \sup_{s\geq 0} \sum_{j=1}^s \e_{j,s}\leq \frac{\e}{2}, \notag
\end{align} we have Algorithm \ref{alg: online_final} is $\e$-differentially private. In particular, with the choice of parameters in  Equation~\eqref{eq: optimal_eps}, Algorithm \ref{alg: online_final} is $\e$-differentially private.
\end{proposition}


\begin{proof}
Since the privacy budget in Algorithm~\ref{alg: online_final} is only spent on the Perturbation step, we only need to show this step is $\e$-differentially private.

    Consider two neighboring data sets $\cX,\cX'$, which are the same except for $x_t\in \cX, x_t'\in \cX'$ arriving at time $t$. Suppose the partition $\mathcal T$ at time $t$ has depth $r=\lfloor \log_2 (\e t) \rfloor$. Then, the true count of $\Omega_\theta$ corresponding to $\cX$ and $\cX'$ are the same except for at most two subregions at each level in $\mathcal T$, and they form two paths of length $r$ in the tree.  For $x_t$, let us denote these subregions \[x_t\in\Omega_{\theta_r} \subset \cdots \subset \Omega_{\theta_1}\subset \Omega.\]  On the other hand, once $x_t$ is given, we know exactly the corresponding subregions in $\mathcal T$ at level $r+1,r+2,\dots$ will contain $x_t$ in the future as soon as they are created. This gives us an infinite sequence of subregions \[\Omega_{\theta_r} \supset \Omega_{\theta_{r+1}}\supset \cdots.\] Similarly we can also obtain an infinite sequence $\Omega\supset\Omega_{\theta_1'} \supset \Omega_{\theta_{2}'}\supset \cdots$ containing $x_t'$.
    
    Consider the first sequence. As the difference of $\cX,\cX'$ at time $t$ will only influence the counts in $\Omega_{\theta_j}$, $j\geq 0$. We consider the subroutine $\cA_{\theta_j}$ in each of the regions $\Omega_{\theta_j}$ for all $j\geq 0$. There are two cases: 
    \begin{enumerate}
        \item When $0<j\leq r$, $\Omega_{\theta_j}$ counts the data $x_t$ at time $t$. So by Lemma~\ref{lem: inhomogeneous}, we protect the privacy of $x_t$ with parameter $\e_{j,r}$.
        
                
        \item When  $j>r$, by the Initialization step in Algorithm~\ref{alg: inhomogeneous_sparse_couting}, $\cA_{\theta_j}$ and the private counts $N_{\theta_j}^{(t)}$ no longer depend on the value of $X_t$. 
    \end{enumerate}
    By the parallel composition rule of differential privacy \cite[Theorem 3.16]{dwork2014algorithmic} and taking a supremum over all possible $t$, we have Algorithm~\ref{alg: online_final} is differentially private with parameter 
    \[\sup_{s\geq 0} \sum_{j=1}^s \e_{j,s}\leq \frac{\e}{2},\]
    where the inequality above holds due to \eqref{eq:sumeps_d}.
    
    The same argument holds for the second subregion sequence containing $x_t'$. Hence the whole algorithm is $\e$-differentially private by applying the parallel composition rule again.
\end{proof}

\subsection{Accuracy}

    Now we consider the accuracy of the output in Wasserstein distance at time $t$ with the corresponding level $r=\lfloor\log_2 (\e t)\rfloor$.     To ensure the accuracy of the consistency step, we include the following lemma from \cite{he2023algorithmically}.
    \begin{lemma}[\cite{he2023algorithmically}, Theorem 4.2]\label{lem:thm42}
        With data set $\cX$ of size $n$ and the binary partition structure $(\Omega_\theta)_{|\theta|\leq r}$ of depth $r$, let $\lambda_\theta$ denote the noise adding to the true count in $\Omega_\theta$. Then by forcing consistency and generating synthetic data $\cY$ of size $n$ according to the consistent private counts, the Wasserstein error is
        \begin{align*}
        &\E W_1(\mu_{\cX}, \mu_\cY)\\
        \lesssim & \frac{1}{n}\sum_{j=0}^{r-1}\sum_{|\theta| = j} \E\big[\max\{|\lambda_{\theta 0}|,|\lambda_{\theta 1}|\}\big]\diam(\Omega_\theta) +\delta.
        \end{align*}
        Here $\delta =\max_{|\theta|=r}\diam(\Omega_\theta) $ is the maximal diameter of the subregions of depth $r$.
    \end{lemma}

We also need the following estimates in the proof.
    \begin{lemma}
\label{lem: shifting_sum}
    Suppose $\alpha>1$ is a constant and $r=\lfloor\log_2 (\e t)\rfloor$, then
    \[S'= \sum_{j=1}^r \alpha^j (r-j+1) \lesssim \alpha^r,\]
    \[S = \sum_{j=1}^r \alpha^j (r-j+1)^2 \lesssim \alpha^r.\]
\end{lemma}
\begin{proof}
We have the following holds:
\begin{align}
    \alpha S' &= \sum_{j=1}^r \alpha^{j+1} (r-j+1) = \sum_{j=2}^{r+1} \alpha^j (r-j+2), \notag  \\
    (\alpha - 1)S' &= \alpha S - S =\sum_{j=2}^{r+1}\alpha^{j} + \alpha^{r+1} - \alpha r, \notag\\
    \Longrightarrow S' &= \frac{\alpha^{r+1} - \alpha^2}{\alpha-1} - \alpha r \lesssim \alpha^r. \label{eq:upperboundS}
\end{align}
For the sum $S$, again, there is
\begin{align*}
    \alpha S &= \sum_{j=1}^r \alpha^{j+1} (r-j+1)^2 = \sum_{j=2}^{r+1} \alpha^j (r-j+2)^2, \\
    (\alpha - 1)S &= \alpha S - S =\sum_{j=2}^{r+1}\alpha^{j}(2r-2j+3) + \alpha^{r+1} - \alpha r^2.
\end{align*}
Applying \eqref{eq:upperboundS}, we have 
\begin{align*}
    S &= \frac{1}{\alpha-1} \left(\sum_{j=2}^{r+1}\alpha^{j}(2r-2j+3) + \alpha^{r+1} + \alpha r^2\right) \\
    &\lesssim \alpha^r + \alpha^{r+1} - \alpha r^2 \lesssim \alpha ^r
\end{align*}
    as desired.
\end{proof}


Next we are ready to prove the accuracy bound \eqref{eq:accuracy_finite}  for $d\geq 2$.   
\begin{proof}[Proof of \eqref{eq:accuracy_finite} for $d\geq 2$]
    Let $\l_{\theta}^{(t)}=N_{\theta}^{(t)}-n_{\theta}^{(t)}$ be the counting noise of subregion $\Omega_\theta$ at time $t$ with $1\leq j=|\theta|\leq r$, where $t_r\leq t< t_{r+1}$. Recall that $t_i$, defined as $t_i=\lceil2^i / \e\rceil$, denotes the timestamp when level $i$ starts. Note that $\Omega_\theta$ is created at time level $j$, which is also the starting level parameter in subroutine $\cA_{\theta}$. 
    
    By Lemma~\ref{lem: inhomogeneous_accuracy}, we have the upper bound of the noise at time $t\geq \frac{e}{\e}$ for $\Omega_\theta$,
    \begin{align}\label{eq:lambda_theta_t}
    \E \abs{\l_{\theta}^{(t)}} \lesssim \;&\Big|{\{x_s\mid s< t_{j}, x_s\in \Omega_{\theta}\}}\Big| + \nonumber \\
    &\quad \sum_{i=j}^{r-1} \frac{1}{\e_{j,i}} + \frac{\log t + \log^{1.5}{n_j}}{\e_{j,r}}.
    \end{align}
    Applying Lemma~\ref{lem:thm42}, at a fixed time $t\geq \frac{e}{\e}$, we have
    
    \begin{align}
        \E &W_1(\mu_{\cX_t}, \mu_{\cY_t})  
        \leq \nonumber \\
        &\frac{1}{t} \left[
    	 \sum_{j=0}^{r-1} \sum_{\theta \in \{0,1\}^j} \E \left[ \max \left\{ \,\abs{\lambda_{\theta0}^{(t)}}, \abs{\lambda_{\theta1}^{(t)}} \right\} \right]\diam(\Omega_\theta) \right]\! + \delta, \label{eq:he2023}
    \end{align}
    where $\delta = \max_{|\theta|=r}\diam(\Omega_\theta)$ denotes the maximal diameter of the subregions of depth $r$. For $\Omega=[0,1]^d$, we have $\diam(\Omega_{\theta}) \asymp 2^{-|\theta|/d}$ and there are $2^{|\theta|}$ many different subregions of such size. 
    
    Note that for fixed $j$, $\e_{j,i}$ decreases as $i$ increases.
    From \eqref{eq:he2023} and \eqref{eq:lambda_theta_t}, we have for $t\geq \frac{e}{\e}$,
    \begin{align}
    \label{eq: pmm_cite}
        &\E W_1(\mu_{\cX_t}, \mu_{\cY_t}) \nonumber\\
        \leq & \frac{1}{t} \sum_{j=0}^{r}\sum_{|\theta|=j} \E \abs{\lambda_{\theta}^{(t)}} \cdot 2^{-j/d} + 2^{-r/d} \nonumber\\
        \lesssim & \frac{1}{t} \sum_{j=0}^{r} \sum_{|\theta|=j} \bigg( \Big|{\{x_s\mid s< t_j, x_s\in \Omega_{\theta}\}}\Big| + \nonumber\\
        & \quad \sum_{i=j}^{r-1} \frac{1}{\e_{j,i}}
        + \frac{\log t + \log^{1.5}{(n_\theta^{(t)}+1)}}{\e_{j,r}} \bigg)\cdot 2^{-j/d} + 2^{-r/d} \nonumber \\
        \lesssim &\frac{1}{t} \sum_{j=0}^{r} \bigg(\frac{2^j}{\e} \! + \!\!\! \sum_{|\theta|=j}\!\!\frac{\log t + \log^{1.5}{(n_\theta^{(t)}+1)}}{\e_{j,r}} \bigg)\!\cdot 2^{-j/d} \!+\! 2^{-r/d} 
    \end{align}
    
Since
    \[\left(\log^{1.5} x\right)'' = \frac{1.5\big(\frac{1}{2\sqrt{\log x}}-\sqrt{\log x}\big)}{x^2} \leq 0, \quad\text{ if } x\geq \sqrt{e},\]
    the function $\log^{1.5} (x+2)$ is  concave  on $[0,+\infty)$. Therefore, for fixed $j$, we can apply Jensen's inequality when summing the $\log^{1.5} (n_\theta^{(t)}+1)$ terms over all $|\theta|=j$ and obtain
    \begin{align*}
        \sum_{|\theta|=j} \log^{1.5}(n_\theta^{(t)} +1) & \leq \sum_{|\theta|=j} \log^{1.5}(n_\theta^{(t)} + 2)\\
        & \leq 2^j \log^{1.5}\bigg(2^{-j}\sum_{|\theta|=j}n_\theta^{(t)} + 2\bigg) \\
        & \leq 2^j \log^{1.5}\bigg(\frac{t}{2^j} + 2\bigg).
    \end{align*}
    Substitute the result above into \eqref{eq: pmm_cite} and we have
    \begin{align}\label{eq:expectation_accuracy_bound}
        &\E W_1(\mu_{\cX_t}, \mu_{\cY_t}) \nonumber\\
        &\lesssim \frac{2^{r(1-1/d)}}{\e t} + 2^{-r/d} + \nonumber\\
        &\quad\quad \quad \frac{1}{t}\sum_{j=0}^{r} \frac{1}{\e_{j,r}} \Big(\log t + \log^{1.5}\Big(\frac{t}{2^j} + 2\Big)\Big) 2^{j(1-1/d)}
    \end{align}

    As $t\in [t_r,t_{r+1})$, we have
    \begin{align*}
        \log(t/2^j + 2) &\leq \log(2^{r-j+1}/\e+2) \\
        & = \log(2^{r-j+1}) + \log \Big(\frac{1}{\e}+ 2^{j-r}\Big)\\
        &\lesssim (r-j+1) + \log \Big(\frac{1}{\e}+ 1\Big).
    \end{align*}

    To attain $\e$-differentially privacy, we can choose the privacy parameters to optimize the accuracy in Wasserstein distance given in \eqref{eq:expectation_accuracy_bound}. One of the nearly best choices is given in \eqref{eq: optimal_eps}.
    Therefore, for the second term in \eqref{eq:expectation_accuracy_bound}, we deduce that
    \begin{align}
    \label{eq: sum_sqrt_Delta}
        &\sum_{j=1}^r\, \frac{1}{\e_{j,r}} \Big(\log t + \log^{1.5}\Big(\frac{t}{2^j} + 2\Big)\Big)2^{j(1-1/d)} \nonumber\\
        \lesssim \,& \frac{2^{r(1-1/d)/2}}{\e} \cdot \nonumber\\
        &\; \sum_{j=1}^r \Big(\log t + (r-j+1)^2 + \log^2 \Big(\frac{1}{\e}+ 1\Big)\Big) 2^{j(1-1/d)/2} \nonumber \\
        \lesssim \,& \frac{2^{r(1-1/d)}}{\e} \Big(\log t + \log^2 \Big(\frac{1}{\e}+ 1\Big)\Big).
    \end{align}
    Here, the last inequality uses Lemma~\ref{lem: shifting_sum} with $\a = 2^{\frac{1-1/d}{2}}>1$ when $d\geq 2$.
    
    Therefore, when $d\geq 2$, with \eqref{eq:expectation_accuracy_bound} and \eqref{eq: sum_sqrt_Delta} we have
    \begin{align*}
        &\E W_1(\mu_{\cX_t}, \mu_{\cY_t})\\ 
        \lesssim & \frac{2^{r(1-1/d)}}{\e t} + \frac{\log t + \log^2 \big(\frac{1}{\e}+ 1\big)}{\e t} \cdot 2^{r(1-1/d)}+ (\e t)^{-1/d} \\
        \lesssim & \Big(\log t + \log^2 \Big(\frac{1}{\e}+ 1\Big)\Big) \cdot (\e t)^{-1/d}.
    \end{align*}

 When $t\geq e/\e+\exp(\log^2(1/\e+1))$, the inequality above can be simplified as follows
  \begin{align*}
        \E W_1(\mu_{\cX_t}, \mu_{\cY_t}) &\lesssim \log (t ) (\e t)^{-1/d}.
    \end{align*}
    
This finishes the proof.
\end{proof}


\section{Proof of Theorem~\ref{thm: main} for $d=1$}\label{sec:proof_d_1}

We now prove  the privacy and accuracy guarantee of Algorithm~\ref{alg: online_final} for $d=1$ with a different choice of privacy parameters $\varepsilon_{j,r}$ given in \eqref{eq: optimal_eps_d=1}.


\begin{proposition}
    When $d=1$, Algorithm~\ref{alg: online_final} satisfies $\e$-online privacy with privacy parameters
    \begin{equation}
        \label{eq: optimal_eps_d=1}
        \e_{j,r} = \frac{3}{\pi^2} \cdot\frac{\e}{(j+1)^2}
    \end{equation}
    for all $r\geq j$.
    Moreover, for any time $t\geq e/\e$, 
    \[\mathbb EW_1(\mu_{\mathcal X_t},\mu_{\mathcal Y_t})\lesssim 
    \frac{1}{\e t}\log^{3}(\e t)\log^{1.5}t.\]
\end{proposition}

\begin{proof}
The privacy guarantee follows from the proof to Proposition~\ref{prop: online_syndata_privacy}. Since we have $r_0=0$ when $d=1$, for every data $x_t$, it influences the true count of $\Omega_\theta$ for exactly one $\theta=\theta_j$ with $|\theta|=j, j\geq 1$. Therefore, following the the proof to Proposition~\ref{prop: online_syndata_privacy}, we know the Algorithm~\ref{alg: online_final} with $d=1$ is $\e$-differentially private as 
\[2\sup_{s\geq 1}\sum_{j=1}^\infty \e_{j,s} = \frac{6}{\pi^2} \sum_{j=1}^\infty \frac{\e}{(j+1)^2} = \e.\]

The accuracy in Wasserstein distance follows from \add{the accuracy proof to Theorem~\ref{thm: main} in Section~\ref{sec:proof}}. Note that when $d=1$, we apply the counting subroutine Algorithm~\ref{alg: inhomogeneous_sparse_couting} with parameter $r_0=0$, which means we do not ignore the earlier data for any $\Omega_\theta$. Therefore, in \eqref{eq:lambda_theta_t}, there is no longer the first term, which indicates the number of data we ignore for $\Omega_\theta$. Hence, following  the proof to \eqref{eq:expectation_accuracy_bound}, at any time $t$, the Wasserstein error between the true data set $\cX_t$ and the synthetic data set $\cY_t$ at time $t$ is 
\begin{align*}
    &\E W_1(\mu_{\cX_t}, \mu_{\cY_t}) \\
        &\lesssim \frac{1}{t}\sum_{j=0}^{r} \frac{1}{\e_{j,r}} \Big(\log t + \log^{1.5}\Big(\frac{t}{2^j} + 2\Big)\Big) 2^{j(1-1/d)} + 2^{-r/d},
\end{align*} 
where $r$ is the level at time $t$, i.e., $\lceil 2^r/\e\rceil\leq t<\lceil 2^{r+1}/\e\rceil$. Substitute $d=1$ and the new choices of privacy parameters, we have 
\begin{align*}
    &\quad \,\E W_1(\mu_{\cX_t}, \mu_{\cY_t}) \\
        & \lesssim \frac{1}{t}\sum_{j=0}^{r} \frac{1}{\e_{j,r}} \Big(\log t + \log^{1.5}\Big(\frac{t}{2^j} + 2\Big)\Big) + 2^{-r} \\ 
        & \lesssim \frac{1}{t}\sum_{j=0}^{r} \frac{(j+1)^2}{\e} \Big(\log t + \log^{1.5}\Big(\frac{t}{2^j} + 2\Big)\Big)+ 2^{-r}\\
        &\lesssim \frac{1}{t}\sum_{j=0}^{r} \frac{(j+1)^2}{\e} \log^{1.5} t+ \frac{1}{\e t}\\
        &\lesssim \frac{(r+1)^{3} \log^{1.5}t}{\e t}\\
        &\lesssim \frac{(\log(\e t)+1)^3\log^{1.5}t}{\e t}.
\end{align*} 
When $t\geq e/\e$,  the inequality above becomes
\begin{align*}
  \E W_1(\mu_{\cX_t}, \mu_{\cY_t}) 
        & \lesssim \frac{\log^3(\e t)\log^{1.5}(t)}{\e t} .  
\end{align*}
This finishes the proof.
\end{proof}

\section{Time complexity}\label{sec: running_time}

We consider the running time for the algorithms to output after the input data arrives at a fixed timestamp $t$. The accumulating time complexity of the algorithms for time $1,\dots,t$ need a further sum of the time complexity at a fixed timestamp.

The Binary Mechanism in \cite{dwork2010differential, chan2011private} has time complexity $O(\log t)$ to give the output at time $t$, as it sums over $O(\log t)$ many Laplacian random variables. Same time complexity holds for sparse counting Algorithm~\ref{alg: finite_sparse_couting}, as it checks the partition threshold with $O(1)$ time and runs Binary Mechanism as a subroutine. 

As for Algorithm~\ref{alg: inhomogeneous_sparse_couting}, \emph{Inhomogenous Sparse Counting}, it is connected by multiple implementations of Algorithm~\ref{alg: finite_sparse_couting}. Furthermore, for a given $t\in [t_r, t_{r+1})$, only one such subroutine is active with time horizon $t_{r+1} - t_r \asymp 2^{r}/\e\asymp t$, so the time complexity is also $O(\log t)$.


For our main Algorithm~\ref{alg: online_final}, there is another variable $d$, the data dimension. We can decompose the procedure of Algorithm~\ref{alg: online_final} at time $t$ as the following steps:
\begin{itemize}
    \item (Partition) For each fixed time $t$ in Algorithm~\ref{alg: online_final}, there are $O(\e t)$ many sub-regions $\Omega_\theta$ in the binary partition tree of $[0,1]^d$, and when further partition happens, $O(\e t)$ many subregions are created.
    \item (Perturbation) Whenever a new data comes at timestamp $t$, it takes $O(\log (\e t))$ complexity to determine the subregions where the new data belongs. Afterwards, for all subroutines $\cA_\theta$'s, they in total take running time $O(\e t\log t)$ by the result above for Algorithm~\ref{alg: inhomogeneous_sparse_couting}.
    \item (Consistency and Output) The time complexity is $O(\e t)$ for the consistency step and $O(dt)$ for the output, as the output is $d$-dimensional data set of size $t$.
\end{itemize}
Therefore, the whole Algorithm~\ref{alg: online_final} has time complexity $O(dt+\e t\log t)$ to output at time $t$.



\section*{Acknowledgement}
{
R.V. is partially supported by NSF grant DMS-1954233, NSF grant DMS-2027299, U.S. Army grant 76649-CS, and
NSF-Simons Research Collaborations on the Mathematical and Scientific Foundations of Deep Learning. Y.Z. is partially supported by NSF-Simons Research Collaborations on the Mathematical and Scientific Foundations of Deep Learning and the AMS-Simons Travel Grant.}


\bibliographystyle{plain}
\bibliography{ref}

\appendix


\section{Additional proofs for online counting algorithms} 
\label{sec:addtional_proofs}

\begin{proof}[Proof of Lemma~\ref{lem:chan_accuracy}]
Lemma~\ref{lem:chan_accuracy} is a modified version of the following lemma, with the utility bound in expectation.
\begin{lemma}[Corollary 4.8 in \cite{chan2011private}]
\label{lem: chan_hybrid}
    The \emph{Binary Mechanism} is $\e$-differentially private for an infinite time horizon. And for any time $t\in [0,T]$ and $\beta>0$, with probability at least $1-\beta$, the counting error at time $t$ is bounded by $\frac{C}{\e}\cdot \log^{1.5}T \cdot\log \frac{1}{\b}$.
\end{lemma}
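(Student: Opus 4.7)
The plan is to recall the construction of the Hybrid mechanism of Chan–Shi–Song and then analyze privacy and accuracy separately. Partition time into dyadic epochs $E_k=[2^k,2^{k+1})$ for $k\geq 0$. Maintain a ``persistent'' noisy running total $\widehat S^{(k)}$ of the true count up to the end of epoch $k$, and update it only at epoch boundaries via $\widehat S^{(k)}=\widehat S^{(k-1)}+S_k+Z_k$, where $S_k=\sum_{t\in E_k}X_t$ and $Z_k\sim\Lap_{\Z}(2/\e)$. Within epoch $k$ run a fresh Binary Mechanism (Dwork--Naor--Pitassi--Rothblum) with known horizon $2^k$ and privacy parameter $\e/2$, and call its output at internal time $t\in E_k$ by $B^{(k)}_t$. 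The mechanism releases $\widehat S^{(k-1)}+B^{(k)}_t$ at each $t\in E_k$.

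For the privacy claim, fix a data point $X_t$ arriving at time $t\in E_k$. It affects the entire output stream only through two places: (i) the single run of the Binary Mechanism used inside epoch $k$, which is $(\e/2)$-DP by the standard analysis of that mechanism, and (ii) the single integer Laplace noise $Z_k$ added at the transition to $\widehat S^{(k)}$, which is $(\e/2)$-DP by sensitivity $1$. All later persistent counts $\widehat S^{(k+1)},\widehat S^{(k+2)},\ldots$ depend on $X_t$ only through $\widehat S^{(k)}$ and so incur no additional privacy cost; all earlier persistent counts are independent of $X_t$. Basic composition then yields $\e$-DP, uniformly in $t$ and independently of the infinite horizon.

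For the accuracy claim, at time $t\in E_k$ write the error as the sum of the persistent noise $\sum_{j<k}Z_j$ and the in-epoch Binary Mechanism error $B^{(k)}_t-\sum_{s\in[2^k,t]}X_s$. A Bernstein-type tail bound for the sum of $k$ independent $\Lap_{\Z}(2/\e)$ variables gives, with probability $\geq 1-\beta/2$, a bound $O(\sqrt{k}\log(1/\beta)/\e)$. For the Binary Mechanism with horizon $2^k$ and parameter $\e/2$, its standard high-probability analysis (summing $\log_2(2^k)=k$ independent Laplacian leaves along each root-to-node path) yields error $O(k^{1.5}\log(1/\beta)/\e)$ with probability $\geq 1-\beta/2$. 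Taking a union bound and using $k=\lfloor\log_2 t\rfloor$ gives total error $O(\e^{-1}\log^{1.5}(t)\log(1/\beta))$, as claimed.

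The main obstacle is getting the privacy accounting right in the infinite-horizon setting: one must argue that a single individual's contribution is used exactly once in a Binary Mechanism and once in a Laplace transition, despite the fact that infinitely many outputs are produced thereafter. The trickier half of accuracy is the high-probability bound on the Binary Mechanism itself; this is not re-proved here but invoked from the finite-horizon analysis, which already appears in \cite{dwork2010differential,chan2011private}. Everything else is routine composition and concentration.
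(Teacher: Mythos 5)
The paper does not prove this lemma; it quotes it directly as \cite[Corollary~4.8]{chan2011private}. Your reconstruction of the Hybrid Mechanism --- a persistent Logarithmic count updated at dyadic boundaries plus a fresh Binary Mechanism inside each epoch, each component given privacy budget $\e/2$, with privacy following from the observation that a single bit touches exactly one Binary Mechanism and one transition noise, and accuracy from tail bounds on the $O(\log t)$ accumulated Laplace terms together with the Binary Mechanism's $O(\e^{-1}\log^{1.5}t\cdot\log(1/\beta))$ high-probability error --- is a faithful and correct account of the Chan--Shi--Song argument.
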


Although such error bound is in probability, we can easily transform it into a similar expectation bound. In fact, let $\mathrm{error}_t$ denote the error at time $t$ between the true count $\sum_{i=1}^t X_i$ and the output at time $t$, we have
\[ \E \mathrm{error}_t = \int_{0}^\infty \Pr{\mathrm{error}_t > u} \dd u.\]
After  a change of variable $u = \frac{C}{\e}\cdot \log^{1.5}T \cdot\log \frac{1}{\b}$ or $\beta = \exp \big(-\frac{\e u}{C\log^{1.5} T} \big)$,
we can compute
\begin{align*}
    &\quad \,\E \mathrm{error}_t \\
    & = \int_{0}^{\infty} \Pr{\mathrm{error}_t > u}\dd u\\
    & = \int_{0}^1 \Pr{\mathrm{error}_t > \frac{C}{\e}\cdot \log^{1.5}t \cdot\log \frac{1}{\b}} \frac{C \log^{1.5} T }{\beta\e} \dd \beta \\
    & \leq  \int_{0}^1 \beta \cdot \frac{C \log^{1.5} T }{\beta\e} \dd \beta\\
    & = \frac{C}{\e}\log^{1.5} T. 
\end{align*}
Therefore, we have the expectation error bound
\begin{equation*}
    \E \mathrm{error}_t \leq  \frac{C}{\e}\log^{1.5} T.
\end{equation*}
\end{proof}

\begin{proof}[Proof of Lemma~\ref{lem: sparse}]
    The privacy part follows from the original proof in \cite[Theorem 3.1]{dwork2015pure}.
    We focus on the accuracy guarantee.

    The algorithm gives a private partition of the time interval and then treats each segment in the partition as a timestamp in the online counting subroutine. We will first prove that there are at most $n+1$ many segments in the partition. 
    
    Note that there are $2T$ many independent $\Lap(2/\e)$ random variables in total. Therefore, by a simple union bound argument, with probability $1/T$, their magnitudes are uniformly bounded by $B = \frac{2}{\e}(2\log T + \log 2)$. Conditioning on this event, we know $S_j+\Lap_\Z(2/\e) > \widetilde{T_j}$ implies that $\abs{S_j-T_0}\leq 2B$ and $S_j > T_0 - 2B > 0$. So whenever a segment is sealed, its true count is non-zero; hence, we have at most $n+1$ segments (in case the last one has not been sealed). 

    Now, we can compute the expectation of error. For the case where $2T$ many $\Lap(2/\e)$ random variables share the uniform bound $B$, the counting error consists of two parts: (1) the error from the online counting subroutine $\cA_{\text{sub}}$ and (2) the approximation error when ignoring the counts within time $[t_j, t]$ (i.e. $S_j$ in the algorithm). The first part is bounded in \eqref{eq: chan_accuracy}, and the second error is bounded by $T_0 + 2B$ (as $S_j+\Lap_\Z(2/\e) \leq \widetilde{T_j}$). So the total error is $O(\frac{1}{\e}(\log T + \log ^{1.5} (n+1)))$. More precisely, the discussion can be written as the following inequality, where $c_t$ and $c_{t_j}$ denote corresponding true counts at time $t, t_j$:
    \begin{align*}
        \abs{c_t-\widetilde S} &\leq \abs{c_{t_j} - \widetilde S} + \abs{c_t - c_{t_j}}\\
        &= \abs{c_{t_j} - \widetilde S} + S_j \\
        &\leq \abs{c_{t_j} - \widetilde S} + T_0 + \abs{S_j - T_0}\\
        &\lesssim \frac{\log^{1.5} (n+1)}{\e} + \frac{\log T}{\e}.
    \end{align*}
    
    For the other case, if the uniform upper bound fails with probability $1/T$, as the content of each segment is no longer available, we only have a trivial upper bound $T$ for the number of segments. So the first error term from $\cA_{\text{sub}}$ becomes $O(\frac{1}{\e}\log^{1.5} T)$. And for the second part, we have a trivial upper bound $|S_j|\leq n\leq T$. Therefore, we have error $O(\frac{1}{\e}\log^{1.5} T + n)$.
 
    By the law of total expectation, we have
    \begin{align*}
        \E \mathrm{error}_t &\lesssim 
    \Big(\frac{\log^{1.5} (n+1)}{\e} + \frac{\log T}{\e}\Big) + \frac{1}{T} \Big(\frac{1}{\e}\log^{1.5} T + n\Big) \\
    &=O\Big(\frac{\log^{1.5} (n+1)}{\e} + \frac{\log T}{\e}\Big).
    \end{align*}
\end{proof}

\end{document}